\definecolor{webgreen}{rgb}{0,.5,0}
\definecolor{webbrown}{rgb}{.6,0,0}
\begin{document}

\theoremstyle{plain}
\newtheorem{theorem}{Theorem}
\newtheorem{corollary}[theorem]{Corollary}
\newtheorem{lemma}[theorem]{Lemma}
\newtheorem{proposition}[theorem]{Proposition}

\theoremstyle{definition}
\newtheorem{definition}[theorem]{Definition}
\newtheorem{example}[theorem]{Example}
\newtheorem{conjecture}[theorem]{Conjecture}

\theoremstyle{remark}
\newtheorem{remark}[theorem]{Remark}

\begin{center}
\vskip 1cm{\LARGE\bf Extension of Summation Formulas involving Stirling series}
\vskip 1cm
\large
Raphael Schumacher\\
\end{center}

\vskip .2 in

\begin{abstract}
\noindent This paper presents a family of rapidly convergent summation formulas for various finite sums of the form $\sum_{k=0}^{\lfloor x\rfloor}f(k)$, where $x$ is a positive real number.
\end{abstract}

\section{Introduction}
\label{sec:introduction}

In this paper we will use the Euler-Maclaurin summation formula \cite{3,5} to obtain rapidly convergent series expansions for finite sums involving Stirling series \cite{1}. Our key tool will be the so called Weniger transformation \cite{1}.\newline
\noindent For example, one of our summation formulas for the sum $\sum_{k=0}^{\lfloor x\rfloor}\sqrt{k}$, where $x\in\mathbb{R}^{+}$ is
\begin{displaymath}
\begin{split}
\sum_{k=0}^{\lfloor x\rfloor}\sqrt{k}&=\frac{2}{3}x^{\frac{3}{2}}-\frac{1}{4\pi}\zeta\left(\frac{3}{2}\right)-\sqrt{x}B_{1}(\{x\})+\sqrt{x}\sum_{k=1}^{\infty}(-1)^{k}\frac{\sum_{l=1}^{k}(-1)^{l}\frac{(2l-3)!!}{2^{l}(l+1)!}S_{k}^{(1)}(l)B_{l+1}(\{x\})}{(x+1)(x+2)\cdots(x+k)}\\
&=\frac{2}{3}x^{\frac{3}{2}}-\frac{1}{4\pi}\zeta\left(\frac{3}{2}\right)+\left(\frac{1}{2}-\{x\}\right)\sqrt{x}+\frac{\left(\frac{1}{4}\{x\}^{2}-\frac{1}{4}\{x\}+\frac{1}{24}\right)\sqrt{x}}{(x+1)}\\
&\quad+\frac{\left(\frac{1}{24}\{x\}^{3}+\frac{3}{16}\{x\}^{2}-\frac{11}{48}\{x\}+\frac{1}{24}\right)\sqrt{x}}{(x+1)(x+2)}+\frac{\left(\frac{1}{64}\{x\}^{4}+\frac{3}{32}\{x\}^{3}+\frac{21}{64}\{x\}^{2}-\frac{7}{16}\{x\}+\frac{53}{640}\right)\sqrt{x}}{(x+1)(x+2)(x+3)}\\
&\quad+\frac{\left(\frac{1}{128}\{x\}^{5}+\frac{19}{256}\{x\}^{4}+\frac{109}{384}\{x\}^{3}+\frac{29}{32}\{x\}^{2}-\frac{977}{768}\{x\}+\frac{79}{320}\right)\sqrt{x}}{(x+1)(x+2)(x+3)(x+4)}+\ldots,
\end{split}
\end{displaymath}
where the $B_{l}(x)$'s are the Bernoulli polynomials and $S_{k}^{(1)}(l)$ denotes the Stirling numbers of the first kind.

\vspace{0.4cm}

\noindent Most of the other formulas in this article have a similar shape.\newline
\noindent Setting in the above formula for $\sum_{k=0}^{\lfloor x \rfloor}\sqrt{k}$ the variable $x:=n\in\mathbb{N}$, we obtain
\begin{displaymath}
\begin{split}
\sum_{k=0}^{n}\sqrt{k}&=\frac{2}{3}n^{\frac{3}{2}}+\frac{1}{2}\sqrt{n}-\frac{1}{4\pi}\zeta\left(\frac{3}{2}\right)+\sqrt{n}\sum_{k=1}^{\infty}(-1)^{k}\frac{\sum_{l=1}^{k}(-1)^{l}\frac{(2l-3)!!}{2^{l}(l+1)!}B_{l+1}S_{k}^{(1)}(l)}{(n+1)(n+2)\cdots(n+k)}\\
&=\frac{2}{3}n^{\frac{3}{2}}+\frac{1}{2}\sqrt{n}-\frac{1}{4\pi}\zeta\left(\frac{3}{2}\right)+\frac{\sqrt{n}}{24(n+1)}+\frac{\sqrt{n}}{24(n+1)(n+2)}+\frac{53\sqrt{n}}{640(n+1)(n+2)(n+3)}\\
&\quad+\frac{79\sqrt{n}}{320(n+1)(n+2)(n+3)(n+4)}+\ldots,
\end{split}
\end{displaymath}
\noindent which is the corresponding formula given in our previous paper \cite{8}.

\section{Definitions}
\label{sec:Definitions}

\noindent As usual, we denote the floor of $x$ by $\lfloor x\rfloor$ and the fractional part of $x$ by $\{x\}$.

\begin{definition}(Pochhammer symbol)\cite{1}\newline
\noindent We define the \emph{Pochhammer symbol} (or rising factorial function) $(x)_{k}$ by 
\begin{displaymath}
(x)_{k}:=x(x+1)(x+2)(x+3)\cdots(x+k-1)=\frac{\Gamma(x+k)}{\Gamma(x)},
\end{displaymath}
where $\Gamma(x)$ is the gamma function defined by
\begin{displaymath}
\Gamma(x):=\int_{0}^{\infty}e^{-t}t^{x-1}dt.
\end{displaymath}
\end{definition}

\begin{definition}(Stirling numbers of the first kind)\cite{1}\newline
\noindent Let $k,l\in\mathbb{N}_{0}$ be two non-negative integers such that $k\geq l\geq0$. We define the \emph{Stirling numbers of the first kind} $S_{k}^{(1)}(l)$ as the connecting coefficients in the identity
\begin{displaymath}
(x)_{k}=(-1)^{k}\sum_{l=0}^{k}(-1)^{l}S_{k}^{(1)}(l)x^{l},
\end{displaymath}
where $(x)_{k}$ is the rising factorial function.
\end{definition}

\begin{definition}(Bernoulli numbers)\cite{2}\newline
\noindent We define the $k$-th \emph{Bernoulli number} $B_{k}$ as the $k$-th coefficient in the generating function relation
\begin{displaymath}
\frac{x}{e^{x}-1}=\sum_{k=0}^{\infty}\frac{B_{k}}{k!}x^{k}\;\;\forall x\in\mathbb{C}\;\text{with $|x|<2\pi$}.
\end{displaymath}
\end{definition}

\begin{definition}(Euler numbers)\cite{3}\newline
\noindent We define the sequence of \emph{Euler numbers} $\left\{E_{k}\right\}_{k=0}^{\infty}$ by the generating function identity
\begin{displaymath}
\frac{2e^{x}}{e^{2x}+1}=\sum_{k=0}^{\infty}\frac{E_{k}}{k!}x^{k}.
\end{displaymath}
\end{definition}

\begin{definition}(Bernoulli polynomials)\cite{2,3}\newline
\noindent We define for $n\in\mathbb{N}_{0}$ the $n$-th \emph{Bernoulli polynomial} $B_{n}(x)$ via the following exponential generating function as
\begin{displaymath}
\frac{te^{xt}}{e^{t}-1}=\sum_{n=0}^{\infty}\frac{B_{n}(x)}{n!}t^{n}\;\;\forall t\in\mathbb{C}\;\text{with $|t|<2\pi$}.
\end{displaymath}
\noindent Using this expression, we see that the value of the $n$-th Bernoulli polynomial $B_{n}(x)$ at the point $x=0$ is
\begin{displaymath}
B_{n}(0)=B_{n},
\end{displaymath}
which is the $n$-th Bernoulli number.
\end{definition}

\begin{definition}(Euler polynomials)\cite{3}\newline
\noindent We define for $n\in\mathbb{N}_{0}$ the $n$-th \emph{Euler polynomial} $E_{n}(x)$ via the following exponential generating function as
\begin{displaymath}
\frac{2e^{xt}}{e^{t}+1}=\sum_{n=0}^{\infty}\frac{E_{n}(x)}{n!}t^{n}.
\end{displaymath}
\noindent Moreover, we have that \cite{4}
\begin{displaymath}
\begin{split}
E_{n}(0)&=-2(2^{n+1}-1)\frac{B_{n+1}}{n+1}\;\;\forall n\in\mathbb{N}_{0}.
\end{split}
\end{displaymath}
\end{definition}

\section{Extended Summation Formulas involving\\
Stirling Series}
\label{sec:Extended Summation Formulas involving Stirling Series}

In this section we will prove our summation formulas for various finite sums of the form $\sum_{k=1}^{\lfloor x\rfloor}f(k)$. 
For this, we need the following

\begin{lemma}(Extended Euler-Maclaurin summation formula)\cite{3,5}\newline
\noindent Let $f$ be an analytic function. Then for all $x\in\mathbb{R}^{+}$, we have that
\begin{displaymath}
\begin{split}
\sum_{k=1}^{\lfloor x\rfloor}f(k)&=\int_{1}^{x}f(t)dt+\sum_{k=1}^{m}(-1)^{k}\frac{B_{k}(\{x\})}{k!}f^{(k-1)}(x)-\sum_{k=1}^{m}\frac{B_{k}}{k!}f^{(k-1)}(1)\\
&\quad+\frac{(-1)^{m+1}}{m!}\int_{1}^{x}B_{m}(\{t\})f^{(m)}(t)dt,
\end{split}
\end{displaymath}
where $B_{m}(x)$ is the $m$-th Bernoulli polynomial and $\{x\}$ denotes the fractional part of $x$. Therefore, for many functions $f$ we have the asymptotic expansion
\begin{displaymath}
\begin{split}
\sum_{k=1}^{\lfloor x\rfloor}f(k)\sim\int_{1}^{x}f(t)dt+C+\sum_{k=1}^{\infty}(-1)^{k}\frac{B_{k}(\{x\})}{k!}f^{(k-1)}(x)\;\;\text{as $x\rightarrow\infty$},
\end{split}
\end{displaymath}
for some constant $C\in\mathbb{C}$
\end{lemma}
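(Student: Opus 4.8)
The plan is to prove the exact finite-$m$ identity by induction on $m$, passing from each order to the next by a single integration by parts of the remainder integral, and then to obtain the asymptotic statement by letting $m\to\infty$ and collecting the $x$-independent pieces into the constant $C$.

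For the base case $m=1$ I would write $B_1(\{t\})=\{t\}-\tfrac12$ and split the remainder $\int_1^x B_1(\{t\})f'(t)\,dt$ at the integers: with $n=\lfloor x\rfloor$ decompose $[1,x]=\bigcup_{j=1}^{n-1}[j,j+1]\cup[n,x]$, on each of which $\{t\}=t-j$ is smooth. Integrating by parts on $[j,j+1]$ produces the averaged boundary values $\tfrac12(f(j)+f(j+1))$ together with $-\int_j^{j+1}f$. Summing over $j$ telescopes these boundary contributions into $\sum_{k=1}^{n}f(k)$ up to half-weights at the two ends, while the final piece $[n,x]$ contributes the term $(\{x\}-\tfrac12)f(x)$. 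Rearranging, and recalling $B_1=-\tfrac12$, reproduces the claimed formula at $m=1$.

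For the inductive step I would use $\frac{d}{dt}B_{m+1}(\{t\})=(m+1)B_m(\{t\})$, valid on each open interval between consecutive integers, to integrate $R_m=\frac{(-1)^{m+1}}{m!}\int_1^x B_m(\{t\})f^{(m)}(t)\,dt$ by parts. The key structural point is that for $m\ge1$ the periodic function $B_{m+1}(\{t\})$ is continuous at every integer, since $B_{m+1}(0)=B_{m+1}(1)$ whenever $m+1\ge2$; hence no jump contributions appear and the integration by parts can be performed across $[1,x]$ at once. This yields the upper-endpoint term $(-1)^{m+1}\frac{B_{m+1}(\{x\})}{(m+1)!}f^{(m)}(x)$, a lower-endpoint term at $t=1$, and the next remainder $R_{m+1}=\frac{(-1)^{m+2}}{(m+1)!}\int_1^x B_{m+1}(\{t\})f^{(m+1)}(t)\,dt$ with exactly the right sign.

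The subtlety I expect to be the main obstacle is the sign of the boundary term at $t=1$: the integration by parts delivers $-\frac{(-1)^{m+1}}{(m+1)!}B_{m+1}f^{(m)}(1)$, whereas the target formula has $-\frac{B_{m+1}}{(m+1)!}f^{(m)}(1)$ with no alternating factor. These agree because for $m\ge1$ one has $(-1)^{m+1}B_{m+1}=B_{m+1}$: when $m+1$ is even the sign is $+1$, and when $m+1$ is odd and at least $3$ the Bernoulli number $B_{m+1}$ vanishes. This is precisely why the case $m=1$, where $B_1\neq0$ and the sign is genuinely $-1$, must be handled separately as the base case. Finally, for the asymptotic expansion I would let $m\to\infty$ in the exact identity: the $x$-independent quantities $-\sum_{k=1}^{\infty}\frac{B_k}{k!}f^{(k-1)}(1)$, together with the limit of the remainder where it exists, are absorbed into the single constant $C$, and for functions whose successive derivatives decay rapidly enough that $R_m$ is dominated by the last retained term, the resulting series is a genuine asymptotic expansion as $x\to\infty$; the hedge ``for many functions'' signals that this convergence and negligibility is function-dependent and is not asserted in full generality.
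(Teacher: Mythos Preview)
The paper does not actually prove this lemma: it is stated with citations to \cite{3,5} and then used as a tool, so there is no in-paper proof to compare against. Your proposal is therefore going beyond what the paper does.

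That said, your argument is the standard inductive proof of the Euler--Maclaurin formula with a free (non-integer) upper limit, and it is correct. The base case $m=1$ checks out exactly as you describe: splitting $[1,x]$ at the integers and integrating $(\{t\}-\tfrac12)f'(t)$ by parts on each piece telescopes to $\sum_{k=1}^{\lfloor x\rfloor}f(k)$. The inductive step is also right; the two points you flag are precisely the ones that need care. First, $B_{m+1}(\{t\})$ is continuous across the integers for $m\ge1$ because $B_{m+1}(1)=B_{m+1}(0)$ when $m+1\ge2$, so a single integration by parts over $[1,x]$ is legitimate. Second, the apparent sign mismatch at the lower endpoint is resolved by $(-1)^{m+1}B_{m+1}=B_{m+1}$ for $m\ge1$, since the odd Bernoulli numbers $B_3,B_5,\dots$ vanish; your observation that this is exactly why $m=1$ must be the base case is the right diagnosis.

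For the asymptotic statement, your reading is appropriate: the paper's phrasing ``for many functions $f$'' is a deliberate hedge, and absorbing the lower-endpoint sum and any residual remainder into a constant $C$ is the intended interpretation. One small refinement: in typical applications the constant $C$ is determined not as the formal series $-\sum_{k\ge1}\frac{B_k}{k!}f^{(k-1)}(1)$ (which usually diverges) but rather by the convergent quantity $-\sum_{k=1}^{m}\frac{B_k}{k!}f^{(k-1)}(1)+\frac{(-1)^{m+1}}{m!}\int_1^\infty B_m(\{t\})f^{(m)}(t)\,dt$, which is independent of $m$ once the tail integral converges. That is how the paper's explicit constants such as $\gamma$ or $\zeta(-m)$ arise.
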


\noindent and

\begin{lemma}(Extended Boole summation formula)\cite{3}\newline
\noindent Let $f$ be an analytic function. Then for all $x\in\mathbb{R}^{+}$, we have that
\begin{displaymath}
\begin{split}
\sum_{k=1}^{\lfloor x\rfloor}(-1)^{k+1}f(k)&=\frac{(-1)^{x-\{x\}}}{2}\sum_{k=0}^{m}(-1)^{k+1}\frac{E_{k}(\{x\})}{k!}f^{(k)}(x)-\sum_{k=0}^{m}\frac{(2^{k+1}-1)B_{k+1}}{(k+1)!}f^{(k)}(1)\\
&\quad+\frac{(-1)^{m}}{2m!}\int_{1}^{x}(-1)^{t-\{t\}}E_{m}(\{t\})f^{(m+1)}(t)dt,
\end{split}
\end{displaymath}
where $E_{m}(x)$ is the $m$-th Euler polynomial and $\{x\}$ denotes the fractional part of $x$.\newline
\noindent Therefore, for many functions $f$ we have the asymptotic expansion
\begin{displaymath}
\begin{split}
\sum_{k=1}^{\lfloor x\rfloor}(-1)^{k+1}f(k)\sim C+\frac{(-1)^{x-\{x\}}}{2}\sum_{k=0}^{\infty}(-1)^{k+1}\frac{E_{k}(\{x\})}{k!}f^{(k)}(x)\;\;\text{as $x\rightarrow\infty$},
\end{split}
\end{displaymath}
for some constant $C\in\mathbb{C}$.
\end{lemma}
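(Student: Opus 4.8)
The plan is to prove the exact identity (the version with the explicit remainder) by an induction on $m$ that lowers the order of the remainder integral one step at a time, and then to read off the asymptotic statement as a formal limit. First I would record the harmless notational fact that $(-1)^{x-\{x\}}=(-1)^{\lfloor x\rfloor}$, so the periodic weight in the integrand is $(-1)^{\lfloor t\rfloor}E_m(\{t\})$, a function that is smooth on each open interval $(k,k+1)$ and whose only possible discontinuities sit at the integers. The crucial preliminary observation is that this weight is in fact \emph{continuous} across every integer when $m\geq 1$: the jump at $t=k$ equals $(-1)^k\bigl(E_m(0)+E_m(1)\bigr)$, and the functional equation $E_m(y+1)+E_m(y)=2y^m$ evaluated at $y=0$ gives $E_m(0)+E_m(1)=0$ for $m\geq 1$. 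This continuity is exactly what keeps the integration by parts free of spurious interior boundary terms.

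For the base case $m=0$ I would compute the right-hand side directly. Since $E_0\equiv 1$ and $B_1=-\tfrac12$, the claim reduces to $\sum_{k=1}^{\lfloor x\rfloor}(-1)^{k+1}f(k)=-\tfrac{(-1)^{\lfloor x\rfloor}}{2}f(x)+\tfrac12 f(1)+\tfrac12\int_1^x(-1)^{\lfloor t\rfloor}f'(t)\,dt$. I would evaluate the integral by splitting $[1,x]$ into unit intervals $[k,k+1]$, on each of which $(-1)^{\lfloor t\rfloor}$ is the constant $(-1)^k$, so that $\int_k^{k+1}(-1)^k f'(t)\,dt=(-1)^k\bigl(f(k+1)-f(k)\bigr)$, together with the fractional piece $\int_{\lfloor x\rfloor}^x(-1)^{\lfloor x\rfloor}f'(t)\,dt$. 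The two $f(x)$ contributions, from the boundary term and from the fractional interval, cancel, and the remaining Abel sum telescopes precisely to $\sum_{k=1}^{\lfloor x\rfloor}(-1)^{k+1}f(k)$.

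For the inductive step I would write the remainder as $R_m=\frac{(-1)^m}{2m!}\int_1^x(-1)^{\lfloor t\rfloor}E_m(\{t\})f^{(m+1)}(t)\,dt$ and integrate by parts, differentiating the weight and integrating $f^{(m+1)}$. Using $E_m'(y)=mE_{m-1}(y)$ together with the continuity above, this yields the upper-endpoint term $\frac{(-1)^{\lfloor x\rfloor}}{2}(-1)^{m+1}\frac{E_m(\{x\})}{m!}f^{(m)}(x)$, a lower-endpoint term proportional to $E_m(0)f^{(m)}(1)$, and the lower-order remainder $R_{m-1}$. Converting the lower-endpoint term via the stated identity $E_m(0)=-2(2^{m+1}-1)\frac{B_{m+1}}{m+1}$ turns it into exactly $-\frac{(2^{m+1}-1)B_{m+1}}{(m+1)!}f^{(m)}(1)$, the $k=m$ summand of the correction sum. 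Hence the right-hand side for index $m$ coincides with the right-hand side for $m-1$, and induction from the base case delivers the formula for every $m$.

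The main obstacle I anticipate is the sign and boundary bookkeeping at the integers, done in tandem. One must confirm both that the alternating weight is genuinely continuous at integers (so no extra jump terms intrude) and that the lower-endpoint contribution reproduces the Bernoulli correction with the right sign. The sign works out through a parity dichotomy: for even $m$ the surviving factor multiplies $B_{m+1}$ with $m+1$ odd, so $B_{m+1}=0$ and the delicate terms vanish identically, while for odd $m$ the parity factor $(-1)^{m+1}$ equals $+1$; this is precisely what lets a single identity hold uniformly in $m$. Finally, the asymptotic statement follows by letting $m\to\infty$, absorbing the finitely many terms $f^{(k)}(1)$ and the accumulated constants into one constant $C$ and discarding the remainder integral under the usual growth hypotheses on $f$ as $x\to\infty$.
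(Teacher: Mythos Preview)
The paper does not actually prove this lemma: it is quoted, with citation, from Borwein--Calkin--Manna \cite{3}, so there is no ``paper's own proof'' to compare against. Your argument is a correct and self-contained derivation along the classical lines: verify the $m=0$ case by splitting the integral at integers and telescoping, then step from $R_m$ to $R_{m-1}$ by one integration by parts, using the continuity of $(-1)^{\lfloor t\rfloor}E_m(\{t\})$ (from $E_m(0)+E_m(1)=0$ for $m\ge 1$) to suppress interior jump terms, the relation $E_m'(y)=mE_{m-1}(y)$ for the bulk, and $E_m(0)=-2(2^{m+1}-1)B_{m+1}/(m+1)$ for the lower endpoint.

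One small point of bookkeeping to tighten: the boundary term at $t=x$ that integration by parts actually produces is
\[
\frac{(-1)^{\lfloor x\rfloor}}{2}\,(-1)^{m}\,\frac{E_m(\{x\})}{m!}\,f^{(m)}(x),
\]
i.e.\ with $(-1)^m$ rather than the $(-1)^{m+1}$ you wrote. This is exactly the \emph{negative} of the $k=m$ summand in the first sum, which is what makes $\mathrm{RHS}(m)=\mathrm{RHS}(m-1)$; your subsequent parity dichotomy (odd $m$ gives the right sign, even $m$ kills the term via $B_{m+1}=0$) is then applied to the lower-endpoint contribution, and that part is correct as stated. With that sign fixed, the induction closes cleanly.
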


\noindent From the above lemma we get by setting $x:=n\in\mathbb{N}$ the following

\begin{corollary}(Boole Summation Formula)\cite{3}\newline
\noindent Let $f$ be an analytic function. Then for all $n\in\mathbb{N}$, we have
\begin{displaymath}
\begin{split}
\sum_{k=1}^{n}(-1)^{k+1}f(k)&=(-1)^{n}\sum_{k=0}^{m}(-1)^{k}\frac{(2^{k+1}-1)B_{k+1}}{(k+1)!}f^{(k)}(n)-\sum_{k=0}^{m}\frac{(2^{k+1}-1)B_{k+1}}{(k+1)!}f^{(k)}(1)\\
&\quad+\frac{(-1)^{m}}{2m!}\int_{1}^{x}(-1)^{t-\{t\}}E_{m}(\{t\})f^{(m+1)}(t)dt,
\end{split}
\end{displaymath}
\noindent and the following asymptotic expansion
\begin{displaymath}
\begin{split}
\sum_{k=1}^{n}(-1)^{k+1}f(k)\sim C+(-1)^{n}\sum_{k=0}^{\infty}(-1)^{k}\frac{(2^{k+1}-1)B_{k+1}}{(k+1)!}f^{(k)}(n)\;\;\text{as $n\rightarrow\infty$},
\end{split}
\end{displaymath}
for some constant $C\in\mathbb{C}$.
\end{corollary}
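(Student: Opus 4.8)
The plan is to specialize the Extended Boole summation formula (the immediately preceding lemma) to the value $x:=n\in\mathbb{N}$ and then simplify. The only external inputs needed are the two elementary facts already available to us: that $\{n\}=0$ for every $n\in\mathbb{N}$, and the closed form $E_{k}(0)=-2(2^{k+1}-1)\frac{B_{k+1}}{k+1}$ recorded in the definition of the Euler polynomials.

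First I would substitute $x=n$ into the finite (exact) identity of the lemma. Since $\{n\}=0$, the prefactor $(-1)^{x-\{x\}}$ becomes $(-1)^{n-0}=(-1)^{n}$, each Euler polynomial $E_{k}(\{x\})$ collapses to the Euler number $E_{k}(0)$, and the remainder integral is just the same integral taken over the integer interval $[1,n]$, so nothing new has to be established about it. The term $-\sum_{k=0}^{m}\frac{(2^{k+1}-1)B_{k+1}}{(k+1)!}f^{(k)}(1)$ already has the desired shape and is left untouched.

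The substantive (though entirely routine) step is the algebraic simplification of the leading sum. Inserting $E_{k}(0)=-2(2^{k+1}-1)\frac{B_{k+1}}{k+1}$ into $\frac{(-1)^{n}}{2}\sum_{k=0}^{m}(-1)^{k+1}\frac{E_{k}(0)}{k!}f^{(k)}(n)$, the factor $\tfrac12$ cancels the $2$, the sign $(-1)^{k+1}$ against the $-1$ inside $E_{k}(0)$ leaves $(-1)^{k}$, and $\frac{1}{k!\,(k+1)}=\frac{1}{(k+1)!}$, so that
\[
\frac{(-1)^{n}}{2}\sum_{k=0}^{m}(-1)^{k+1}\frac{E_{k}(0)}{k!}f^{(k)}(n)
=(-1)^{n}\sum_{k=0}^{m}(-1)^{k}\frac{(2^{k+1}-1)B_{k+1}}{(k+1)!}f^{(k)}(n),
\]
which is exactly the first term in the corollary. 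Combining this with the unchanged $f^{(k)}(1)$-sum and the remainder integral yields the claimed finite identity.

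Finally, for the asymptotic expansion I would let $m\to\infty$ for those functions $f$ where this is legitimate, exactly as in the passage from the finite identity to the asymptotic expansion in the lemma: the $n$-independent quantities (the terms in $f^{(k)}(1)$ together with the limiting contribution of the remainder integral) merge into a single constant $C\in\mathbb{C}$, while the $n$-dependent part is precisely the displayed series in $f^{(k)}(n)$. I do not expect a genuine obstacle here; the entire content of the proof is the sign- and factorial-bookkeeping in the previous paragraph, and the only point worth an explicit word is that under $x=n$ the lemma's remainder term becomes a remainder of the same form over an integer interval, so its treatment requires nothing beyond what the lemma already provides.
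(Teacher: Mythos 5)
Your proposal is correct and is exactly the paper's route: the paper obtains the corollary from the Extended Boole summation formula simply "by setting $x:=n\in\mathbb{N}$," and your substitution of $\{n\}=0$ together with $E_{k}(0)=-2(2^{k+1}-1)\frac{B_{k+1}}{k+1}$ supplies precisely the sign- and factorial-bookkeeping that the paper leaves implicit. No gap; if anything, you make explicit a step the paper omits.
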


\noindent As well as the next key result found by J. Weniger:

\begin{lemma}(Generalized Weniger transformation)\cite{1}\newline
\noindent For every inverse power series $\sum_{k=1}^{\infty}\frac{a_{k}(x)}{x^{k+1}}$, where $a_{k}(x)$ is any function in $x$, the following transformation formula holds
\begin{displaymath}
\begin{split}
\sum_{k=1}^{\infty}\frac{a_{k}(x)}{x^{k+1}}&=\sum_{k=1}^{\infty}\frac{(-1)^{k}}{(x)_{k+1}}\sum_{l=1}^{k}(-1)^{l}S_{k}^{(1)}(l)a_{l}(x)\\
&=\sum_{k=1}^{\infty}\frac{(-1)^{k}\sum_{l=1}^{k}(-1)^{l}S_{k}^{(1)}(l)a_{l}(x)}{x(x+1)(x+2)\cdots(x+k)}.
\end{split}
\end{displaymath}
\end{lemma}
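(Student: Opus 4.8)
\noindent The plan is to transform the right-hand side of the lemma back into the left-hand side by interchanging the two summations and then using, for each fixed $l$, an \emph{exact} expansion of the single negative power $x^{-(l+1)}$ as a factorial series. The one fact I would isolate first is: for every integer $l\geq1$ and every real $x>0$,
\[
\frac{1}{x^{l+1}}=\sum_{k=l}^{\infty}\frac{(-1)^{k-l}S_{k}^{(1)}(l)}{(x)_{k+1}}=\sum_{k=l}^{\infty}\frac{|S_{k}^{(1)}(l)|}{(x)_{k+1}},
\]
the second equality being immediate because the defining relation $(x)_{k}=(-1)^{k}\sum_{l}(-1)^{l}S_{k}^{(1)}(l)x^{l}$ forces $S_{k}^{(1)}(l)$ to have sign $(-1)^{k-l}$. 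To prove the first equality I would use the Beta-integral form of the reciprocal Pochhammer symbol, $\frac{1}{(x)_{k+1}}=\frac{1}{k!}\cdot\frac{\Gamma(x)\Gamma(k+1)}{\Gamma(x+k+1)}=\frac{1}{k!}\int_{0}^{1}t^{x-1}(1-t)^{k}\,dt$, which gives
\[
\sum_{k=l}^{\infty}\frac{|S_{k}^{(1)}(l)|}{(x)_{k+1}}=\int_{0}^{1}t^{x-1}\sum_{k=l}^{\infty}\frac{|S_{k}^{(1)}(l)|}{k!}(1-t)^{k}\,dt .
\]
The inner series is the (unsigned) exponential generating function of the Stirling numbers of the first kind: from $\sum_{k\geq l}S_{k}^{(1)}(l)\,z^{k}/k!=(\ln(1+z))^{l}/l!$, the substitution $z=t-1$ together with $|S_{k}^{(1)}(l)|=(-1)^{k-l}S_{k}^{(1)}(l)$ yields $\sum_{k\geq l}\frac{|S_{k}^{(1)}(l)|}{k!}(1-t)^{k}=\frac{(-\ln t)^{l}}{l!}$ for $0<t<1$ (the series has radius of convergence $1$). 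Hence, after the change of variable $t=e^{-s}$,
\[
\sum_{k=l}^{\infty}\frac{|S_{k}^{(1)}(l)|}{(x)_{k+1}}=\frac{1}{l!}\int_{0}^{1}t^{x-1}(-\ln t)^{l}\,dt=\frac{1}{l!}\int_{0}^{\infty}s^{l}e^{-sx}\,ds=\frac{1}{x^{l+1}},
\]
the interchange of sum and integral being legitimate by nonnegativity of the summands and monotone convergence. (An alternative, purely algebraic proof proceeds by induction on $l$: the base case $l=1$ follows from the telescoping identity $x\,(k-1)!/(x)_{k+1}=(k-1)!/(x)_{k}-k!/(x)_{k+1}$, and the inductive step uses the recurrence for the Stirling numbers of the first kind.)

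\medskip

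\noindent With this identity in hand the assembly is short. Writing the right-hand side of the lemma as a double sum over $\{(k,l):1\leq l\leq k\}$ and summing over $k$ first,
\[
\sum_{k=1}^{\infty}\frac{(-1)^{k}}{(x)_{k+1}}\sum_{l=1}^{k}(-1)^{l}S_{k}^{(1)}(l)a_{l}(x)=\sum_{l=1}^{\infty}(-1)^{l}a_{l}(x)\sum_{k=l}^{\infty}\frac{(-1)^{k}S_{k}^{(1)}(l)}{(x)_{k+1}}=\sum_{l=1}^{\infty}(-1)^{l}a_{l}(x)\cdot\frac{(-1)^{l}}{x^{l+1}},
\]
using the isolated identity (in the form $\sum_{k\geq l}(-1)^{k}S_{k}^{(1)}(l)/(x)_{k+1}=(-1)^{l}/x^{l+1}$) for the last step. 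Since $(-1)^{l}\cdot(-1)^{l}=1$ this collapses to $\sum_{l\geq1}a_{l}(x)/x^{l+1}$, which is exactly the left-hand side.

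\medskip

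\noindent The step I expect to require the most care is the interchange of the order of summation just used, because the lemma is stated for an \emph{arbitrary} coefficient function $a_{l}(x)$. Properly speaking the identity has to be read either as an identity of formal inverse power series in the scale $\{x^{-(l+1)}\}$, or --- and this is the setting in which it is applied throughout the present paper, where each $a_{l}(x)$ equals a bounded function of $\{x\}$ times $\sqrt{x}$ or a comparably slowly growing factor --- under the hypothesis that the original series $\sum_{l\geq1}a_{l}(x)/x^{l+1}$ converges absolutely for the relevant values of $x$. Under that hypothesis Fubini's theorem applies, since by the isolated identity $\sum_{l\geq1}\sum_{k\geq l}|a_{l}(x)|\,|S_{k}^{(1)}(l)|/(x)_{k+1}=\sum_{l\geq1}|a_{l}(x)|/x^{l+1}<\infty$ for real $x>0$, so both the rearrangement and the collapse of the $k$-sum are justified; I would state this convergence hypothesis explicitly at the start of the proof.
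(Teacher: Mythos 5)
The paper does not actually prove this lemma --- it is quoted verbatim from Weniger's article \cite{1} with no argument supplied --- so there is no in-paper proof to compare against; judged on its own, your proof is correct and complete. The core of your argument, the factorial-series expansion $x^{-(l+1)}=\sum_{k\geq l}(-1)^{k-l}S_{k}^{(1)}(l)/(x)_{k+1}$ proved via the Beta-integral representation of $1/(x)_{k+1}$ and the exponential generating function $\sum_{k\geq l}S_{k}^{(1)}(l)z^{k}/k!=(\log(1+z))^{l}/l!$, is exactly the classical Stirling--Nielsen identity on which Weniger's transformation rests, and your sign bookkeeping ($S_{k}^{(1)}(l)=(-1)^{k-l}|S_{k}^{(1)}(l)|$, consistent with the paper's definition of $(x)_{k}$) checks out, as do the monotone-convergence and Fubini justifications. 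Your closing caveat is also well taken and worth keeping: as stated for ``any function $a_{k}(x)$'' the lemma can only be read either as a formal identity in the scale $\{x^{-(k+1)}\}$ or under an absolute-convergence hypothesis, and in this paper it is in fact applied termwise to divergent asymptotic (Euler--Maclaurin) expansions, so the formal reading is the operative one; making that explicit, as you do, supplies a precision that neither the lemma statement nor the surrounding text provides.
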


\noindent Now, we prove as an example the following

\begin{theorem}(Extended summation formulas for the harmonic series)\newline
\noindent For every positive real number $x\in\mathbb{R}^{+}$, we have that
\begin{displaymath}
\begin{split}
\sum_{k=1}^{\lfloor x\rfloor}\frac{1}{k}&=\log(x)+\gamma-\frac{B_{1}(\{x\})}{x}+\sum_{k=1}^{\infty}(-1)^{k+1}\frac{\sum_{l=1}^{k}\frac{(-1)^{l}}{l+1}S_{k}^{(1)}(l)B_{l+1}(\{x\})}{x(x+1)(x+2)\cdots(x+k)}.
\end{split}
\end{displaymath}
\noindent We also have that
\begin{displaymath}
\begin{split}
\sum_{k=1}^{\lfloor x\rfloor}\frac{1}{k}&=\log(x)+\gamma+\sum_{k=1}^{\infty}(-1)^{k+1}\frac{\sum_{l=1}^{k}\frac{(-1)^{l}}{l}S_{k}^{(1)}(l)B_{l}(\{x\})}{(x+1)(x+2)\cdots(x+k)}.
\end{split}
\end{displaymath}
\end{theorem}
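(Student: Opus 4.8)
The plan is to apply the Extended Euler-Maclaurin summation formula to $f(t)=1/t$ and then recast the resulting asymptotic tail into the factorial form supplied by the Generalized Weniger transformation. First I would assemble the two ingredients that Euler-Maclaurin requires: the integral $\int_{1}^{x}t^{-1}\,dt=\log(x)$ and the derivatives $f^{(k-1)}(t)=(-1)^{k-1}(k-1)!\,t^{-k}$. Substituting the latter into the general term collapses it, since $(-1)^{k}\frac{B_{k}(\{x\})}{k!}f^{(k-1)}(x)=-\frac{B_{k}(\{x\})}{k\,x^{k}}$, leaving the clean asymptotic expansion
\begin{displaymath}
\sum_{k=1}^{\lfloor x\rfloor}\frac{1}{k}\sim\log(x)+C-\sum_{k=1}^{\infty}\frac{B_{k}(\{x\})}{k\,x^{k}}\quad\text{as }x\to\infty,
\end{displaymath}
for some constant $C$. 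I would fix $C=\gamma$ by letting $x$ run through the integers, where $B_{k}(\{x\})=B_{k}(0)=B_{k}$, and comparing with the defining limit $\gamma=\lim_{n\to\infty}\bigl(\sum_{k=1}^{n}\frac{1}{k}-\log n\bigr)$ of the Euler-Mascheroni constant.

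For the first identity I would peel off the $k=1$ term, namely $-B_{1}(\{x\})/x$, and re-index the remaining tail by $k\mapsto k+1$, which rewrites it as $\sum_{k=1}^{\infty}a_{k}(x)/x^{k+1}$ with $a_{k}(x)=-B_{k+1}(\{x\})/(k+1)$. Feeding these coefficients into the Generalized Weniger transformation and carrying the overall minus sign through the Stirling sum reproduces exactly the stated series, with denominators $(x)_{k+1}=x(x+1)\cdots(x+k)$ and inner coefficients $\frac{(-1)^{l}}{l+1}S_{k}^{(1)}(l)B_{l+1}(\{x\})$. For the second identity the trick is to split off nothing at all: instead I would absorb one power of $x$ into the coefficients by setting $\widetilde{a}_{k}(x)=-x\,B_{k}(\{x\})/k$, so that the \emph{entire} tail (now including $k=1$) takes the admissible form $\sum_{k=1}^{\infty}\widetilde{a}_{k}(x)/x^{k+1}$. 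Applying Weniger and using the cancellation $x/(x)_{k+1}=1/\bigl((x+1)(x+2)\cdots(x+k)\bigr)$, the spare factor of $x$ eliminates the leading factor of the Pochhammer denominator, yielding the second form in which $B_{l}(\{x\})$ and $1/l$ appear in place of $B_{l+1}(\{x\})$ and $1/(l+1)$.

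The main obstacle is not the bookkeeping but the upgrade from the asymptotic relation $\sim$ to a genuine equality $=$. The raw Euler-Maclaurin tail $\sum_{k}B_{k}(\{x\})/(k\,x^{k})$ is a divergent asymptotic series, and what legitimizes the two displayed formulas is precisely that the Weniger transformation reorganizes it into a convergent factorial (Stirling) series that represents the sum exactly. I would therefore close the argument by invoking the convergence of the transformed factorial series established in \cite{1}, which is what licenses replacing $\sim$ by $=$ in both cases.
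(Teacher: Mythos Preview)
Your proposal is correct and follows essentially the same route as the paper: apply the extended Euler--Maclaurin formula to $f(t)=1/t$ to obtain the asymptotic $\log(x)+\gamma-\sum_{k\ge 1}B_k(\{x\})/(k x^{k})$, then feed the tail (after peeling off the $k=1$ term for the first identity, and after multiplying and dividing by $x$ for the second) into the generalized Weniger transformation. If anything, you are more explicit than the paper in identifying $C=\gamma$ and in flagging the passage from the divergent asymptotic series to the convergent factorial series as the point where the reference \cite{1} is doing the real work.
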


\begin{proof}
Applying the extended Euler-Maclaurin summation formula to the function $f(x):=\frac{1}{x}$, we get that
\begin{displaymath}
\sum_{k=1}^{\lfloor x\rfloor}\frac{1}{k}\sim\log(x)+\gamma-\sum_{k=1}^{\infty}\frac{B_{k}(\{x\})}{kx^{k}}.
\end{displaymath}
Applying now the generalized Weniger transformation to the equivalent series
\begin{displaymath}
\begin{split}
\sum_{k=1}^{\lfloor x\rfloor}\frac{1}{k}&\sim\log(x)+\gamma-\frac{B_{1}(\{x\})}{x}-\sum_{k=2}^{\infty}\frac{B_{k}(\{x\})}{kx^{k}}\\
&\sim\log(x)+\gamma-\frac{B_{1}(\{x\})}{x}-\sum_{k=1}^{\infty}\frac{B_{k+1}(\{x\})}{(k+1)x^{k+1}},
\end{split}
\end{displaymath}
we get the first claimed formula.
To obtain the second expression, we apply the generalized Weniger transformation to the identity
\begin{displaymath}
\sum_{k=1}^{\lfloor x\rfloor}\frac{1}{k}\sim\log(x)+\gamma-x\sum_{k=1}^{\infty}\frac{B_{k}(\{x\})}{kx^{k+1}}.
\end{displaymath}
\end{proof}

\noindent At this point, we want to give an overview on summation formulas obtained with this method:

\begin{itemize}
\item[1.)]{{\bf Extended summation formulas for the harmonic series:}\newline
\noindent For every positive real number $x\in\mathbb{R}^{+}$, we have that
\begin{displaymath}
\begin{split}
\sum_{k=1}^{\lfloor x\rfloor}\frac{1}{k}&=\log(x)+\gamma-\frac{B_{1}(\{x\})}{x}+\sum_{k=1}^{\infty}(-1)^{k+1}\frac{\sum_{l=1}^{k}\frac{(-1)^{l}}{l+1}S_{k}^{(1)}(l)B_{l+1}(\{x\})}{x(x+1)(x+2)\cdots(x+k)}
\end{split}
\end{displaymath}
and
\begin{displaymath}
\begin{split}
\sum_{k=1}^{\lfloor x\rfloor}\frac{1}{k}&=\log(x)+\gamma+\sum_{k=1}^{\infty}(-1)^{k+1}\frac{\sum_{l=1}^{k}\frac{(-1)^{l}}{l}S_{k}^{(1)}(l)B_{l}(\{x\})}{(x+1)(x+2)\cdots(x+k)}.
\end{split}
\end{displaymath}}

\item[2.)]{{\bf Extended summation formulas for the partial sums of $\zeta(2)$:}\newline
\noindent For every positive real number $x\in\mathbb{R}^{+}$, we have that
\begin{displaymath}
\begin{split}
\sum_{k=1}^{\lfloor x\rfloor}\frac{1}{k^{2}}&=\zeta(2)-\frac{1}{x}-\frac{B_{1}(\{x\})}{x^{2}}+\frac{1}{x}\sum_{k=1}^{\infty}(-1)^{k+1}\frac{\sum_{l=1}^{k}(-1)^{l}S_{k}^{(1)}(l)B_{l+1}(\{x\})}{x(x+1)(x+2)\cdots(x+k)}
\end{split}
\end{displaymath}
and
\begin{displaymath}
\begin{split}
\sum_{k=1}^{\lfloor x\rfloor}\frac{1}{k^{2}}&=\zeta(2)-\frac{1}{x}+\sum_{k=1}^{\infty}(-1)^{k+1}\frac{\sum_{l=1}^{k}(-1)^{l}S_{k}^{(1)}(l)B_{l}(\{x\})}{x(x+1)(x+2)\cdots(x+k)}.
\end{split}
\end{displaymath}}

\item[3.)]{{\bf Extended summation formulas for the partial sums of $\zeta(3)$:}\newline
\noindent For every positive real number $x\in\mathbb{R}^{+}$, we have that
\begin{displaymath}
\begin{split}
\sum_{k=1}^{\lfloor x\rfloor}\frac{1}{k^{3}}&=\zeta(3)-\frac{1}{2x^{2}}-\frac{B_{1}(\{x\})}{x^{3}}+\frac{1}{2x^{2}}\sum_{k=1}^{\infty}(-1)^{k+1}\frac{\sum_{l=1}^{k}(-1)^{l}(l+2)S_{k}^{(1)}(l)B_{l+1}(\{x\})}{x(x+1)(x+2)\cdots(x+k)}
\end{split}
\end{displaymath}
and
\begin{displaymath}
\begin{split}
\sum_{k=1}^{\lfloor x\rfloor}\frac{1}{k^{3}}&=\zeta(3)-\frac{1}{2x^{2}}+\frac{1}{2}\sum_{k=1}^{\infty}(-1)^{k+1}\frac{\sum_{l=1}^{k}(-1)^{l}(l+1)S_{k}^{(1)}(l)B_{l}(\{x\})}{x^{2}(x+1)(x+2)\cdots(x+k)}.
\end{split}
\end{displaymath}}

\item[4.)]{{\bf Extended summation formulas for the sum of the square roots:}\newline
\noindent For every positive real number $x\in\mathbb{R}^{+}$, we have that
\begin{displaymath}
\begin{split}
\sum_{k=0}^{\lfloor x\rfloor}\sqrt{k}&=\frac{2}{3}x^{\frac{3}{2}}-\frac{1}{4\pi}\zeta\left(\frac{3}{2}\right)-\sqrt{x}B_{1}(\{x\})+\sqrt{x}\sum_{k=1}^{\infty}(-1)^{k}\frac{\sum_{l=1}^{k}\frac{(-1)^{l}(2l-3)!!}{2^{l}(l+1)!}S_{k}^{(1)}(l)B_{l+1}(\{x\})}{(x+1)(x+2)\cdots(x+k)},
\end{split}
\end{displaymath}

\begin{displaymath}
\begin{split}
\sum_{k=0}^{\lfloor x\rfloor}\sqrt{k}&=\frac{2}{3}x^{\frac{3}{2}}-\frac{1}{4\pi}\zeta\left(\frac{3}{2}\right)-\sqrt{x}B_{1}(\{x\})+\frac{B_{2}(\{x\})}{4\sqrt{x}}+\sum_{k=1}^{\infty}(-1)^{k}\frac{\sum_{l=1}^{k}\frac{(-1)^{l}(2l-1)!!}{2^{l+1}(l+2)!}S_{k}^{(1)}(l)B_{l+2}(\{x\})}{\sqrt{x}(x+1)(x+2)\cdots(x+k)}
\end{split}
\end{displaymath}
and
\begin{displaymath}
\begin{split}
\sum_{k=0}^{\lfloor x\rfloor}\sqrt{k}&=\frac{2}{3}x^{\frac{3}{2}}-\frac{1}{4\pi}\zeta\left(\frac{3}{2}\right)+x\sqrt{x}\sum_{k=1}^{\infty}(-1)^{k}\frac{\sum_{l=1}^{k}\frac{(-1)^{l}(2l-5)!!}{2^{l-1}l!}S_{k}^{(1)}(l)B_{l}(\{x\})}{(x+1)(x+2)\cdots(x+k)}.
\end{split}
\end{displaymath}}

\item[5.)]{{\bf Extended summation formulas for the partial sums of $\zeta(-3/2)$:}\newline
\noindent For every positive real number $x\in\mathbb{R}^{+}$, we have that
\begin{displaymath}
\begin{split}
\sum_{k=0}^{\lfloor x\rfloor}k\sqrt{k}&=\frac{2}{5}x^{\frac{5}{2}}-\frac{3}{16\pi^{2}}\zeta\left(\frac{5}{2}\right)-x^{\frac{3}{2}}B_{1}(\{x\})+\frac{3}{2}x^{\frac{3}{2}}\sum_{k=1}^{\infty}(-1)^{k+1}\frac{\sum_{l=1}^{k}\frac{(-1)^{l}(2l-5)!!}{2^{l-1}(l+1)!}S_{k}^{(1)}(l)B_{l+1}(\{x\})}{(x+1)(x+2)\cdots(x+k)},
\end{split}
\end{displaymath}

\begin{displaymath}
\begin{split}
\sum_{k=0}^{\lfloor x\rfloor}k\sqrt{k}&=\frac{2}{5}x^{\frac{5}{2}}-\frac{3}{16\pi^{2}}\zeta\left(\frac{5}{2}\right)-x^{\frac{3}{2}}B_{1}(\{x\})+\frac{3}{4}\sqrt{x}B_{2}(\{x\})-\frac{B_{3}(\{x\})}{4\sqrt{x}}\\
&\quad+\frac{3}{2}\sum_{k=1}^{\infty}(-1)^{k+1}\frac{\sum_{l=1}^{k}\frac{(-1)^{l}(2l-1)!!}{2^{l+1}(l+3)!}S_{k}^{(1)}(l)B_{l+3}(\{x\})}{\sqrt{x}(x+1)(x+2)\cdots(x+k)}
\end{split}
\end{displaymath}
and
\begin{displaymath}
\begin{split}
\sum_{k=0}^{\lfloor x\rfloor}k\sqrt{k}&=\frac{2}{5}x^{\frac{5}{2}}-\frac{3}{16\pi^{2}}\zeta\left(\frac{5}{2}\right)+\frac{3}{2}x^{\frac{5}{2}}\sum_{k=1}^{\infty}(-1)^{k+1}\frac{\sum_{l=1}^{k}\frac{(-1)^{l}(2l-7)!!}{2^{l-2}l!}S_{k}^{(1)}(l)B_{l}(\{x\})}{(x+1)(x+2)\cdots(x+k)}.
\end{split}
\end{displaymath}
}

\item[6.)]{{\bf Extended summation formulas for the partial sums of $\zeta(-5/2)$:}\newline
\noindent For every positive real number $x\in\mathbb{R}^{+}$, we have that
\begin{displaymath}
\begin{split}
\sum_{k=0}^{\lfloor x\rfloor}k^{2}\sqrt{k}&=\frac{2}{7}x^{\frac{7}{2}}+\frac{15}{64\pi^{3}}\zeta\left(\frac{7}{2}\right)-x^{\frac{5}{2}}B_{1}(\{x\})+\frac{15}{4}x^{\frac{5}{2}}\sum_{k=1}^{\infty}(-1)^{k}\frac{\sum_{l=1}^{k}\frac{(-1)^{l}(2l-7)!!}{2^{l-2}(l+1)!}S_{k}^{(1)}(l)B_{l+1}(\{x\})}{(x+1)(x+2)\cdots(x+k)},
\end{split}
\end{displaymath}

\begin{displaymath}
\begin{split}
\sum_{k=0}^{\lfloor x\rfloor}k^{2}\sqrt{k}&=\frac{2}{7}x^{\frac{7}{2}}+\frac{15}{64\pi^{3}}\zeta\left(\frac{7}{2}\right)-x^{\frac{5}{2}}B_{1}(\{x\})+\frac{5}{4}x^{\frac{3}{2}}B_{2}(\{x\})-\frac{5}{8}\sqrt{x}B_{3}(\{x\})+\frac{5B_{4}(\{x\})}{64\sqrt{x}}\\
&\quad+\frac{15}{4}\sum_{k=1}^{\infty}(-1)^{k}\frac{\sum_{l=1}^{k}\frac{(-1)^{l}(2l-1)!!}{2^{l+1}(l+4)!}S_{k}^{(1)}(l)B_{l+4}(\{x\})}{\sqrt{x}(x+1)(x+2)\cdots(x+k)}
\end{split}
\end{displaymath}
and
\begin{displaymath}
\begin{split}
\sum_{k=0}^{\lfloor x\rfloor}k^{2}\sqrt{k}&=\frac{2}{7}x^{\frac{7}{2}}+\frac{15}{64\pi^{3}}\zeta\left(\frac{7}{2}\right)+\frac{15}{4}x^{\frac{7}{2}}\sum_{k=1}^{\infty}(-1)^{k}\frac{\sum_{l=1}^{k}\frac{(-1)^{l}(2l-9)!!}{2^{l-3}l!}S_{k}^{(1)}(l)B_{l}(\{x\})}{(x+1)(x+2)\cdots(x+k)}.
\end{split}
\end{displaymath}}

\item[7.)]{{\bf Extended summation formulas for the sum of the inverse square roots:}\newline
\noindent For every positive real number $x\in\mathbb{R}^{+}$, we have that
\begin{displaymath}
\begin{split}
\sum_{k=1}^{\lfloor x\rfloor}\frac{1}{\sqrt{k}}
&=2\sqrt{x}+\zeta\left(\frac{1}{2}\right)-\frac{B_{1}(\{x\})}{\sqrt{x}}+\sum_{k=1}^{\infty}(-1)^{k+1}\frac{\sum_{l=1}^{k}\frac{(-1)^{l}(2l-1)!!}{2^{l}(l+1)!}S_{k}^{(1)}(l)B_{l+1}(\{x\})}{\sqrt{x}(x+1)(x+2)\cdots(x+k)}
\end{split}
\end{displaymath}
and
\begin{displaymath}
\begin{split}
\sum_{k=1}^{\lfloor x\rfloor}\frac{1}{\sqrt{k}}
&=2\sqrt{x}+\zeta\left(\frac{1}{2}\right)+\sqrt{x}\sum_{k=1}^{\infty}(-1)^{k+1}\frac{\sum_{l=1}^{k}\frac{(-1)^{l}(2l-3)!!}{2^{l-1}l!}S_{k}^{(1)}(l)B_{l}(\{x\})}{(x+1)(x+2)\cdots(x+k)}.
\end{split}
\end{displaymath}}

\item[8.)]{{\bf Extended summation formulas for the partial sums of $\zeta(3/2)$:}\newline
\noindent For every positive real number $x\in\mathbb{R}^{+}$, we have that
\begin{displaymath}
\begin{split}
\sum_{k=1}^{\lfloor x\rfloor}\frac{1}{k\sqrt{k}}
&=\zeta\left(\frac{3}{2}\right)-\frac{2}{\sqrt{x}}-\frac{B_{1}(\{x\})}{x\sqrt{x}}+\frac{2}{\sqrt{x}}\sum_{k=1}^{\infty}(-1)^{k+1}\frac{\sum_{l=1}^{k}\frac{(-1)^{l}(2l+1)!!}{2^{l+1}(l+1)!}S_{k}^{(1)}(l)B_{l+1}(\{x\})}{x(x+1)(x+2)\cdots(x+k)}
\end{split}
\end{displaymath}
and
\begin{displaymath}
\begin{split}
\sum_{k=1}^{\lfloor x\rfloor}\frac{1}{k\sqrt{k}}
&=\zeta\left(\frac{3}{2}\right)-\frac{2}{\sqrt{x}}+2\sum_{k=1}^{\infty}(-1)^{k+1}\frac{\sum_{l=1}^{k}\frac{(-1)^{l}(2l-1)!!}{2^{l}l!}S_{k}^{(1)}(l)B_{l}(\{x\})}{\sqrt{x}(x+1)(x+2)\cdots(x+k)}.
\end{split}
\end{displaymath}}

\item[9.)]{{\bf Extended summation formulas for the partial sums of $\zeta(5/2)$:}\newline
\noindent For every positive real number $x\in\mathbb{R}^{+}$, we have that
\begin{displaymath}
\begin{split}
\sum_{k=1}^{\lfloor x\rfloor}\frac{1}{k^{2}\sqrt{k}}&=\zeta\left(\frac{5}{2}\right)-\frac{2}{3x^{\frac{3}{2}}}-\frac{B_{1}(\{x\})}{x^{2}\sqrt{x}}+\frac{4}{3x\sqrt{x}}\sum_{k=1}^{\infty}(-1)^{k+1}\frac{\sum_{l=1}^{k}\frac{(-1)^{l}(2l+3)!!}{2^{l+2}(l+1)!}S_{k}^{(1)}(l)B_{l+1}(\{x\})}{x(x+1)(x+2)\cdots(x+k)}
\end{split}
\end{displaymath}
and
\begin{displaymath}
\begin{split}
\sum_{k=1}^{\lfloor x\rfloor}\frac{1}{k^{2}\sqrt{k}}&=\zeta\left(\frac{5}{2}\right)-\frac{2}{3x^{\frac{3}{2}}}+\frac{4}{3\sqrt{x}}\sum_{k=1}^{\infty}(-1)^{k+1}\frac{\sum_{l=1}^{k}\frac{(-1)^{l}(2l+1)!!}{2^{l+1}l!}S_{k}^{(1)}(l)B_{l}(\{x\})}{x(x+1)(x+2)\cdots(x+k)}.
\end{split}
\end{displaymath}}

\item[10.)]{{\bf Extended Generalized Faulhaber Formulas:}\newline
\noindent For every positive real number $x\in\mathbb{R}^{+}$ and for every complex number $m\in\mathbb{C}\setminus\{-1\}$, we have that
\begin{displaymath}
\begin{split}
\sum_{k=1}^{\lfloor x\rfloor}k^{m}=\frac{1}{m+1}x^{m+1}+\zeta\left(-m\right)+\frac{x^{m+1}}{m+1}\sum_{k=1}^{\infty}(-1)^{k}\frac{\sum_{l=1}^{k}{m+1\choose l}S^{(1)}_{k}(l)B_{l}(\{x\})}{(x+1)(x+2)\cdots(x+k)}
\end{split}
\end{displaymath}
\noindent and
\begin{displaymath}
\begin{split}
\sum_{k=1}^{\lfloor x\rfloor}k^{m}=\frac{1}{m+1}x^{m+1}+\zeta\left(-m\right)-x^{m}B_{1}(\{x\})+\frac{x^{m}}{m+1}\sum_{k=1}^{\infty}(-1)^{k+1}\frac{\sum_{l=1}^{k}{m+1\choose l+1}S^{(1)}_{k}(l)B_{l+1}(\{x\})}{(x+1)(x+2)\cdots(x+k)}
\end{split}
\end{displaymath}
\noindent and
\begin{displaymath}
\begin{split}
\sum_{k=1}^{\lfloor x\rfloor}k^{m}&=\frac{1}{m+1}x^{m+1}+\zeta\left(-m\right)+\frac{1}{m+1}\sum_{k=1}^{\lfloor m+1\rfloor}(-1)^{k}{m+1\choose k}B_{k}(\{x\})x^{m-k+1}\\
&\quad+(-1)^{\lceil m+1\rceil}\frac{x^{m-\lceil m+1\rceil+2}}{m+1}\sum_{k=1}^{\infty}(-1)^{k+1}\frac{\sum_{l=1}^{k}{m+1\choose l+\lceil m+1\rceil-1}S^{(1)}_{k}(l)B_{l+\lceil m+1\rceil-1}(\{x\})}{(x+1)(x+2)\cdots(x+k)}.
\end{split}
\end{displaymath}
}

\item[11.)]{{\bf Generalized Faulhaber Formulas:}\newline
\noindent For every complex number $m\in\mathbb{C}\setminus\{-1\}$ and every natural number $n\in\mathbb{N}$, we have that
\begin{displaymath}
\begin{split}
\sum_{k=1}^{n}k^{m}=\frac{1}{m+1}n^{m+1}+\zeta\left(-m\right)+\frac{n^{m+1}}{m+1}\sum_{k=1}^{\infty}\frac{(-1)^{k}\sum_{l=1}^{k}{m+1\choose l}B_{l}S^{(1)}_{k}(l)}{(n+1)(n+2)\cdots(n+k)}
\end{split}
\end{displaymath}
\noindent and
\begin{displaymath}
\begin{split}
\sum_{k=1}^{n}k^{m}=\frac{1}{m+1}n^{m+1}+\zeta\left(-m\right)+\frac{1}{2}n^{m}+\frac{n^{m}}{m+1}\sum_{k=1}^{\infty}(-1)^{k+1}\frac{\sum_{l=1}^{k}{m+1\choose l+1}B_{l+1}S^{(1)}_{k}(l)}{(n+1)(n+2)\cdots(n+k)}
\end{split}
\end{displaymath}
\noindent and
\begin{displaymath}
\begin{split}
\sum_{k=1}^{n}k^{m}&=\frac{1}{m+1}n^{m+1}+\zeta\left(-m\right)+\frac{1}{m+1}\sum_{k=1}^{\lfloor m+1\rfloor}(-1)^{k}{m+1\choose k}B_{k}n^{m-k+1}\\
&\quad+(-1)^{\lceil m+1\rceil}\frac{n^{m-\lceil m+1\rceil+2}}{m+1}\sum_{k=1}^{\infty}(-1)^{k+1}\frac{\sum_{l=1}^{k}{m+1\choose l+\lceil m+1\rceil-1}B_{l+\lceil m+1\rceil-1}S^{(1)}_{k}(l)}{(n+1)(n+2)\cdots(n+k)}.
\end{split}
\end{displaymath}
}

\item[12.)]{{\bf Extended convergent versions of Stirling's formula:}\newline
\noindent For every positive real number $x\in\mathbb{R}^{+}$, we have that
\begin{displaymath}
\begin{split}
\sum_{k=1}^{\lfloor x\rfloor}\log(k)&=x\log(x)-x+\frac{1}{2}\log\left(2\pi\right)-\log(x)B_{1}(\{x\})+\sum_{k=1}^{\infty}(-1)^{k}\frac{\sum_{l=1}^{k}\frac{(-1)^{l}}{l(l+1)}S_{k}^{(1)}(l)B_{l+1}(\{x\})}{(x+1)(x+2)\cdots(x+k)},
\end{split}
\end{displaymath}

\begin{displaymath}
\begin{split}
\sum_{k=1}^{\lfloor x\rfloor}\log(k)&=x\log(x)-x+\frac{1}{2}\log\left(2\pi\right)-\log(x)B_{1}(\{x\})+\frac{B_{2}(\{x\})}{2x}\\
&\quad+\sum_{k=1}^{\infty}(-1)^{k}\frac{\sum_{l=1}^{k}\frac{(-1)^{l}}{(l+1)(l+2)}S_{k}^{(1)}(l)B_{l+2}(\{x\})}{x(x+1)(x+2)\cdots(x+k)}
\end{split}
\end{displaymath}
and that
\begin{displaymath}
\begin{split}
\sum_{k=1}^{\lfloor x\rfloor}\log(k)&=x\log(x)-x+\frac{1}{2}\log\left(2\pi\right)-\log(x)B_{1}(\{x\})+x\sum_{k=1}^{\infty}(-1)^{k}\frac{\sum_{l=2}^{k}\frac{(-1)^{l}}{l(l-1)}S_{k}^{(1)}(l)B_{l}(\{x\})
}{(x+1)(x+2)\cdots(x+k)}.
\end{split}
\end{displaymath}}

\item[13.)]{{\bf Extended first logarithmic summation formulas:}\newline
\noindent For every positive real number $x\in\mathbb{R}^{+}$, we have that
\begin{displaymath}
\begin{split}
\sum_{k=0}^{\lfloor x\rfloor}k\log(k)&=\frac{1}{2}x^{2}\log(x)-\frac{1}{4}x^{2}+\frac{1}{12}-\zeta^{'}(-1)-x\log(x)B_{1}(\{x\})+\frac{1}{2}\log(x)B_{2}(\{x\})\\
&\quad+\sum_{k=1}^{\infty}(-1)^{k+1}\frac{\sum_{l=1}^{k}\frac{(-1)^{l}}{l(l+1)(l+2)}S_{k}^{(1)}(l)B_{l+2}(\{x\})}{(x+1)(x+2)\cdots(x+k)}
\end{split}
\end{displaymath}
and
\begin{displaymath}
\begin{split}
\sum_{k=0}^{\lfloor x\rfloor}k\log(k)&=\frac{1}{2}x^{2}\log(x)-\frac{1}{4}x^{2}+\frac{1}{12}-\zeta^{'}(-1)-x\log(x)B_{1}(\{x\})+\frac{1}{2}\log(x)B_{2}(\{x\})-\frac{B_{3}(\{x\})}{6x}\\
&\quad+\sum_{k=1}^{\infty}(-1)^{k+1}\frac{\sum_{l=1}^{k}\frac{(-1)^{l}}{(l+1)(l+2)(l+3)}S_{k}^{(1)}(l)B_{l+3}(\{x\})}{x(x+1)(x+2)\cdots(x+k)}.
\end{split}
\end{displaymath}}

\item[14.)]{{\bf Extended second logarithmic summation formula:}\newline
\noindent For every positive real number $x\in\mathbb{R}^{+}$, we have that
\begin{displaymath}
\begin{split}
\sum_{k=1}^{\lfloor x\rfloor}\frac{\log(k)}{k}&=\frac{1}{2}\log(x)^{2}+\gamma_{1}-\frac{\log(x)}{x}B_{1}(\{x\})+\sum_{k=1}^{\infty}(-1)^{k}\frac{\sum_{l=1}^{k}\frac{(-1)^{l}}{(l+1)!}S_{l+1}^{(1)}(2)S_{k}^{(1)}(l)B_{l+1}(\{x\})}{x(x+1)(x+1)\cdots(x+k)}\\
&\quad+\log(x)\sum_{k=1}^{\infty}(-1)^{k}\frac{\sum_{l=1}^{k}\frac{1}{l+1}S_{k}^{(1)}(l)B_{l+1}(\{x\})}{x(x+1)(x+2)\cdots(x+k)}.
\end{split}
\end{displaymath}}

\item[15.)]{{\bf Extended third logarithmic summation formula:}\newline
\noindent For every positive real number $x\in\mathbb{R}^{+}$, we have that
\begin{displaymath}
\begin{split}
\sum_{k=1}^{\lfloor x\rfloor}\frac{\log(k)}{k^{2}}&=-\zeta^{'}(2)-\frac{\log(x)}{x}-\frac{1}{x}-\frac{\log(x)}{x^{2}}B_{1}(\{x\})\\
&\quad+\frac{1}{x}\sum_{k=1}^{\infty}(-1)^{k+1}\frac{\sum_{l=1}^{k}\frac{1}{l+1}\left(\sum_{m=0}^{l-1}\frac{m+1}{l-m}\right)S_{k}^{(1)}(l)B_{l+1}(\{x\})}{x(x+1)(x+2)\cdots(x+k)}\\
&\quad+\frac{\log(x)}{x}\sum_{k=1}^{\infty}\frac{(-1)^{k}\sum_{l=1}^{k}S_{k}^{(1)}(l)B_{l+1}(\{x\})}{x(x+1)(x+2)\cdots(x+k)}.
\end{split}
\end{displaymath}}

\item[16.)]{{\bf Extended fourth logarithmic summation formula:}\newline
\noindent For every positive real number $x\in\mathbb{R}^{+}$, we have that
\begin{displaymath}
\begin{split}
\sum_{k=1}^{\lfloor x\rfloor}\log(k)^2&=x\log(x)^{2}-2x\log(x)+2x+\frac{\gamma^{2}}{2}-\frac{\pi^{2}}{24}-\frac{\log(2)^{2}}{2}-\log(2)\log(\pi)-\frac{\log(\pi)^{2}}{2}+\gamma_{1}\\
&\quad-\log(x)^{2}B_{1}(\{x\})+\frac{\log(x)}{x}B_{2}(\{x\})+2\sum_{k=1}^{\infty}(-1)^{k}\frac{\sum_{l=1}^{k}\frac{(-1)^{l}}{(l+2)!}S_{l+1}^{(1)}(2)S_{k}^{(1)}(l)B_{l+2}(\{x\})}{x(x+1)(x+2)\cdots(x+k)}\\
&\quad+2\log(x)\sum_{k=1}^{\infty}(-1)^{k}\frac{\sum_{l=1}^{k}\frac{1}{(l+1)(l+2)}S_{k}^{(1)}(l)B_{l+2}(\{x\})}{x(x+1)(x+2)\cdots(x+k)}.
\end{split}
\end{displaymath}}

\item[17.)]{{\bf Extended summation formula for partial sums of the Gregory-Leibniz series:}\newline
\noindent For every positive real number $x\in\mathbb{R}^{+}$, we have that
\begin{displaymath}
\begin{split}
\sum_{k=0}^{\lfloor x\rfloor}\frac{(-1)^{k}}{2k+1}&=\frac{\pi}{4}+\frac{(-1)^{x-\{x\}}}{2}\sum_{k=0}^{\infty}\frac{(-1)^{k}\sum_{l=0}^{k}(-1)^{l}2^{l}S^{(1)}_{k}(l)E_{l}(\{x\})}{(2x+1)(2x+2)(2x+3)\cdots(2x+k+1)}.
\end{split}
\end{displaymath}

\noindent Setting $x:=n\in\mathbb{N}$, we get that
\begin{displaymath}
\begin{split}
\sum_{k=0}^{n}\frac{(-1)^{k}}{2k+1}&=\frac{\pi}{4}+(-1)^{n+1}\sum_{k=0}^{\infty}\frac{(-1)^{k}\sum_{l=0}^{k}\frac{(-1)^{l}}{l+1}2^{l}(2^{l+1}-1)S^{(1)}_{k}(l)B_{l+1}}{(2n+1)(2n+2)(2n+3)\cdots(2n+k+1)}.
\end{split}
\end{displaymath}}

\item[18.)]{{\bf Extended formula for the partial sums of the alternating harmonic series:}\newline
\noindent For every positive real number $x\in\mathbb{R}^{+}$, we have that
\begin{displaymath}
\begin{split}
\sum_{k=1}^{\lfloor x\rfloor}\frac{(-1)^{k+1}}{k}&=\log(2)+\frac{(-1)^{x-\{x\}}}{2}\sum_{k=0}^{\infty}(-1)^{k+1}\frac{\sum_{l=0}^{k}(-1)^{l}S^{(1)}_{k}(l)E_{l}(\{x\})}{x(x+1)(x+2)\cdots(x+k)}.
\end{split}
\end{displaymath}

\noindent Setting $x:=n\in\mathbb{N}$, we get
\begin{displaymath}
\begin{split}
\sum_{k=1}^{n}\frac{(-1)^{k+1}}{k}&=\log(2)+(-1)^{n}\sum_{k=0}^{\infty}(-1)^{k}\frac{\sum_{l=0}^{k}\frac{(-1)^{l}}{l+1}(2^{l+1}-1)S^{(1)}_{k}(l)B_{l+1}}{n(n+1)(n+2)\cdots(n+k)}.
\end{split}
\end{displaymath}}

\end{itemize}

\section{Other Extended Summation Formulas for Finite Sums}
\label{sec:Other Extended Summation Formulas for Finite Sums}

\noindent In this section, we denote by $\eta(s):=\sum_{k=1}^{\infty}\frac{(-1)^{k+1}}{k^{s}}$ the Dirichlet eta function.

\begin{itemize}
\item[1.)]{{\bf The extended alternating Faulhaber formula:}\begin{displaymath}
\begin{split}
\sum_{k=1}^{\lfloor x\rfloor}(-1)^{k+1}k^{m}&=\eta(-m)+\frac{(-1)^{x-\{x\}}}{2(m+1)}\sum_{k=0}^{m}(-1)^{k+1}(k+1){m+1\choose k+1}E_{k}(\{x\})x^{m-k}\;\;\forall m\in\mathbb{N}_{0}.
\end{split}
\end{displaymath}

\noindent Setting $x:=n\in\mathbb{N}$, we get
\begin{displaymath}
\begin{split}
\sum_{k=1}^{n}(-1)^{k+1}k^{m}&=\eta(-m)+\frac{(-1)^{n}}{m+1}\sum_{k=0}^{m}(-1)^{k}(2^{k+1}-1){m+1\choose k+1}B_{k+1}n^{m-k}\;\;\forall m\in\mathbb{N}_{0}.
\end{split}
\end{displaymath}}

\item[2.)]{{\bf The extended generalized alternating Faulhaber formula:}
\begin{displaymath}
\begin{split}
\sum_{k=1}^{\lfloor x\rfloor}(-1)^{k+1}k^{m}&=\eta(-m)+\frac{(-1)^{x-\{x\}}x^{m}}{2(m+1)}\sum_{k=0}^{\infty}(-1)^{k+1}\frac{\sum_{l=0}^{k}(l+1){m+1\choose l+1}S^{(1)}_{k}(l)E_{l}(\{x\})}{(x+1)(x+2)\cdots(x+k)}\;\;\forall m\in\mathbb{C}.
\end{split}
\end{displaymath}

\noindent Setting $x:=n\in\mathbb{N}$, we get
\begin{displaymath}
\begin{split}
\sum_{k=1}^{n}(-1)^{k+1}k^{m}&=\eta(-m)+\frac{(-1)^{n}n^{m}}{(m+1)}\sum_{k=0}^{\infty}(-1)^{k}\frac{\sum_{l=0}^{k}{m+1\choose l+1}(2^{l+1}-1)S^{(1)}_{k}(l)B_{l+1}}{(n+1)(n+2)\cdots(n+k)}\;\;\forall m\in\mathbb{C}.
\end{split}
\end{displaymath}}

\item[3.)]{{\bf The Geometric Summation Formula:}
\begin{displaymath}
\sum_{k=0}^{\lfloor x\rfloor}a^{k}=\frac{a^{x}}{\log(a)}+\frac{1}{1-a}+a^{x}\sum_{k=1}^{\infty}(-1)^{k}\frac{\sum_{l=1}^{k}\frac{\log(a)^{l-1}}{l!}S^{(1)}_{k}(l)B_{l}(\{x\})x^{l}
}{(x+1)(x+2)\cdots(x+k)}\;\;\forall a\neq1.
\end{displaymath}

\noindent Setting $x:=n\in\mathbb{N}$, we get
\begin{displaymath}
\sum_{k=0}^{n}a^{k}=\frac{a^{n}}{\log(a)}+\frac{1}{1-a}+a^{n}\sum_{k=1}^{\infty}(-1)^{k}\frac{\sum_{l=1}^{k}\frac{\log(a)^{l-1}}{l!}S^{(1)}_{k}(l)B_{l}n^{l}
}{(n+1)(n+2)\cdots(n+k)}\;\;\forall a\neq1.
\end{displaymath}}

\item[4.)]{{\bf The alternating Geometric Summation Formula:}
\begin{displaymath}
\sum_{k=0}^{\lfloor x\rfloor}(-1)^{k}a^{k}=\frac{1}{1+a}+\frac{(-1)^{x-\{x\}}}{2}a^{x}\sum_{k=0}^{\infty}(-1)^{k}\frac{\sum_{l=0}^{k}\frac{\log(a)^{l}}{l!}S^{(1)}_{k}(l)E_{l}(\{x\})x^{l}}{(x+1)(x+2)\cdots(x+k)}
\;\;\forall a\neq-1.
\end{displaymath}

\noindent Setting $x:=n\in\mathbb{N}$, we get
\begin{displaymath}
\sum_{k=0}^{n}(-1)^{k}a^{k}=\frac{1}{1+a}+(-1)^{n+1}a^{n}\sum_{k=0}^{\infty}(-1)^{k}\frac{\sum_{l=0}^{k}\frac{\log(a)^{l}}{(l+1)!}(2^{l+1}-1)S^{(1)}_{k}(l)B_{l+1}n^{l}}{(n+1)(n+2)\cdots(n+k)}
\;\;\forall a\neq-1.
\end{displaymath}}

\item[5.)]{{\bf The Euler-Maclaurin Geometric Summation Formula:}
\begin{displaymath}
\begin{split}
&\sum_{k=0}^{\lfloor x\rfloor}a^{k}=\frac{a^{x}}{\log(a)}+\frac{1}{1-a}+a^{x}\sum_{k=1}^{\infty}(-1)^{k}\frac{\log(a)^{k-1}}{k!}B_{k}(\{x\})\;\;\text{for}\;\;\frac{1}{e^{2\pi}}<a\neq1<e^{2\pi}.
\end{split}
\end{displaymath}}

\item[6.)]{{\bf The Euler-Maclaurin alternating Geometric Summation Formula:}
\begin{displaymath}
\begin{split}
\sum_{k=0}^{\lfloor x\rfloor}(-1)^{k}a^{k}&=\frac{1}{1+a}+(-1)^{x-\{x\}}\frac{a^{x}(a-1)}{a+1}\sum_{k=0}^{\infty}(-1)^{k}\frac{\log(a)^{k-1}}{k!}B_{k}(\{x\})\;\;\text{for}\;\;\frac{1}{e^{2\pi}}<a<e^{2\pi}.
\end{split}
\end{displaymath}}

\item[7.)]{{\bf The Exponential Geometric Summation Formula:}\newline
\begin{displaymath}
\begin{split}
\sum_{k=0}^{\lfloor x\rfloor}e^{k}&=e^{x}+\frac{1}{1-e}+e^{x}\sum_{k=1}^{\infty}(-1)^{k}\frac{B_{k}(\{x\})}{k!}\;\;\forall x\in\mathbb{R}^{+}.
\end{split}
\end{displaymath}}

\item[8.)]{{\bf The Self-Counting Summation Formula:}\newline
\noindent Let $\{a_{k}\}_{k=1}^{\infty}:=\{1,2,2,3,3,3,4,4,4,4,...\}$ be the self-counting sequence \cite{6,7} defined by
\begin{displaymath}
\begin{split}
a_{k}&:=\left\lfloor\frac{1}{2}+\sqrt{2k}\right\rfloor.
\end{split}
\end{displaymath}
\noindent Then, we have that
\begin{displaymath}
\begin{split}
\sum_{k=1}^{\lfloor x\rfloor}a_{k}&=
\frac{x\sqrt{8x+1}}{3}-\frac{5\sqrt{8x+1}}{24}-\frac{\sqrt{8x+1}}{2}B_{1}(\{x\})+B_{1}\left(\left\{\frac{\sqrt{8x+1}}{2}-\frac{1}{2}\right\}\right)B_{1}(\{x\})\\
&\quad+\frac{1}{2}B_{1}\left(\left\{\frac{\sqrt{8x+1}}{2}-\frac{1}{2}\right\}\right)-\frac{\sqrt{8x+1}}{4}B_{2}\left(\left\{\frac{\sqrt{8x+1}}{2}-\frac{1}{2}\right\}\right)\\
&\quad+\frac{1}{6}B_{3}\left(\left\{\frac{\sqrt{8x+1}}{2}-\frac{1}{2}\right\}\right).
\end{split}
\end{displaymath}}

\item[9.)]{{\bf Slowly convergent summation formula for the sum of the square roots:}
\begin{displaymath}
\begin{split}
\sum_{k=0}^{\lfloor x\rfloor}\sqrt{k}&=\frac{2}{3}x^{\frac{3}{2}}-\sqrt{x}B_{1}(\{x\})-\frac{1}{2\pi}\sum_{k=1}^{\infty}\frac{\text{FresnelS}\left(2\sqrt{k}\sqrt{x}\right)}{k^{\frac{3}{2}}}.
\end{split}
\end{displaymath}}

\item[10.)]{{\bf Slowly convergent summation formula for the harmonic series:}
\begin{displaymath}
\begin{split}
\sum_{k=1}^{\lfloor x\rfloor}\frac{1}{k}&=\log(x)+\gamma+2\sum_{k=1}^{\infty}\text{CosIntegral}(2\pi kx).
\end{split}
\end{displaymath}}
\end{itemize}

\section{Conclusion}
\label{sec:Conclusion}

\noindent This paper presents an overview on some rapidly convergent summation formulas obtained by  applying the Weniger transformation \cite{1}.

\bigskip
\hrule
\bigskip

\noindent 2010 {\it Mathematics Subject Classification}: Primary 65B15; Secondary 11B68.

\noindent\emph{Keywords:} Rapidly convergent summation formulas for finite sums of functions, extended Stirling summation formulas, extended Euler-Maclaurin summation formula, extended Boole summation formula, generalized Weniger transformation, Stirling numbers of the first kind, Bernoulli numbers, Bernoulli polynomials, Euler numbers, Euler polynomials.

\end{document}